\documentclass[12pt,twoside,reqno]{amsart}

\usepackage{hyperref}
\usepackage{amsthm, amsmath, amscd, amssymb,centernot}
\usepackage[all]{xy}
\usepackage[T1]{fontenc}
\usepackage[left=2.5cm,top=2.5cm,bottom=3cm,right=2.5cm]{geometry}
\usepackage{amsthm, amsmath, amscd, amssymb,centernot}
\usepackage{comment}
\setlength{\headheight}{15.2pt}
\setlength\parskip{.1in}
\setlength\parindent{0.2in}

\usepackage{tikz}
\usepackage{csquotes}

\newcommand{\s}{\varepsilon}

\numberwithin{equation}{section}
\newtheorem{theorem}{Theorem}[section]
\newtheorem*{theorem*}{Theorem}

\newtheorem{definition}[theorem]{Definition}
\theoremstyle{definition}

\newtheorem{remark}[theorem]{Remark}
\newtheorem{example}[theorem]{Example}


\begin{document}
\title{Some Results on Seshadri constants on Surfaces of general type}
\author[Praveen Kumar Roy]{Praveen Kumar Roy}
\address{Chennai Mathematical Institute, H1 SIPCOT IT Park, Siruseri, Kelambakkam 603103, India}
\email{praveenkroy@cmi.ac.in}

\subjclass[2010]{14C20, 14J29}
\thanks{Author was partially supported by a grant from Infosys Foundation}

\date{May 24, 2019}
\maketitle
\begin{abstract}
We prove two new results for Seshadri constants on surfaces of 
general type. Let $X$ be a surface of general type. In the first part, 
inspired by \cite{B-S}, we list the possible values for the multi-point 
Seshadri constant $\s(K_X,x_1,x_2,...,x_r)$ when it lies between 
$0$ and $1/r$, where $K_X$ is the canonical line bundle on $X$. 
In the second part, we assume $X$ of the form $C \times C$, 
where $C$ is a general smooth curve of genus $g \geq 2$. 
Given such $X$ and an ample line bundle $L$ on $X$ with some conditions 
on it, we show that the global Seshadri constant of $L$ is a rational number.
\end{abstract}


\section{introduction}\label{intro}

Seshadri constants have turned out to be a powerful tool to measure local positivity of an 
ample line bundle on a projective variety. They were defined by Demailly using the 
Seshadri criterion of ampleness for a line bundle \cite{D-JPS}. Since then, the area 
has emerged to be quite active with computing and bounding the Seshadri constants 
becoming an active area of research. For a detailed survey and the typical nature of work, 
we refer to \cite{primer,D-K-M-S,F,K-A,K-B,K-P}.

Let $X$ be a smooth complex projective surface and let $L$ be a line bundle on $X$. 
Given a point $x \in X$, Seshadri criterion for ampleness \cite{H} says that $L$ is an 
ample line bundle on $X$ if and only if there exists a positive real number $\s $ 
such that $L\cdot C \geq \s \cdot mult_x C $ for all $x\in C$. Here, ``$mult_x C$" denotes the 
multiplicity of $C$ at the point $x$.  Given $X$ and $L$ as above, the {\it Seshadri constant} 
$\varepsilon(X,L,x)$ of $L$ at a point $x\in X$ \cite{L} is defined as
\[
\varepsilon(X,L,x):=  \inf\limits_{\substack{x \in C}} \frac{L\cdot  C}{{\rm mult}_{x}C},\
\]
where the infimum is taken over all irreducible and reduced curves $C$ passing through $x$. 
Now, it is easy to see that $L$ is ample if and only if $\s(X,L,x) > 0$ for all $x\in X$. There are 
various directions in which one can study Seshadri constants. For a comprehensive survey, 
we refer to \cite{primer}. 

Given a smooth complex projective surface $X$ and an ample line bundle $L$ on $X$, it is not difficult 
to see that $0< \s(X,L,x) \leq \sqrt{L^2}$ for every $x\in X.$ Thus, it makes sense to define:
\[
\s(X,L,1) := \sup\limits_{x\in X}\s(X,L,x) \quad \text{and} \quad \s(X,L) := \inf\limits_{x\in X}\s(X,L,x).
\]
These satisfy the following inequalities:
\[
0< \s(X,L) \leq \s(X,L,x) \leq \s(X,L,1) \leq \sqrt{L^2}.
\]

It is known that $\s(X,L,1)$ is attained at a very general point $x\in X$ \cite{O}. Further, 
if $\s(X,L,1) < \sqrt{L^2} $, then there exists a reduced and irreducible curve $C\subset X$ 
containing a very general point $x$ such that $\s(X,L,1) = \s(X,L,x) = \frac{L\cdot C}{mult_xC}$. 
Therefore, the Seshadri constant is a rational number in this situation. Consequently, for an 
irrational Seshadri constant, $\s(X,L,x)$ must be equal to $\sqrt{L^2}$ and, $L^2 $ must 
be non-square. However, there is no known example of a triple $(X,L,x)$ which gives an irrational 
Seshadri constant.

On the other hand, $\s(X,L)$ is computed at some special points $x\in X$. Therefore, in order to 
compute $\s(X,L)$ one needs to find some information about the curves passing through $x\in X$. 
In other words, geometry of $X$ near that point is important.

\subsection{Multi-point Seshadri constants}

Let $X$ be a smooth complex projective surface and $L$ be an ample line bundle on $X$. 
Let $r\geq 1$ be an integer and $x_1,x_2,...,x_r \in X$ be $r$ distinct points. Then, the 
multi-point Seshadri constant of $L$ at $x_1,x_2,...,x_r \in X$ is defined as:
\begin{eqnarray*}
\s(X,L,x_1,x_2,...,x_r) := \inf \limits_{C \cap \{x_1, x_2,...,x_r \}\neq \phi }\frac{L\cdot C}{\sum\limits_{i=1}^{i=r}{mult_{x_i}C}},
\end{eqnarray*}
where the infimum is taken over all reduced and irreducible curves $C \subset X$ passing 
through at least one of the points $x_1,x_2,...,x_r \in X$. A well known upper bound for the 
multi-point Seshadri constant is 
\begin{eqnarray*}
\s(X,L,x_1,x_2,...,x_r) \leq \sqrt{\frac{L^2}{r}}.
\end{eqnarray*}
The Seshadri constant is said to be {\it sub-maximal} if the above inequality is strict, and in 
that case it is computed by a curve $C\subset X$ (i.e., $\s(X,L,x_1,x_2,...,x_r) = 
L\cdot C/\sum\limits_{i=1}^{i=r}{mult_{x_i}C}$),  which is known as the {\it Seshadri curve}. 
See \cite[Proposition 1.1]{B-S} for a proof of their existence in the single point case which 
generalizes easily to the multi-point case. 

One can then define:
\[
\s(X,L,r) := \max\limits_{\substack{x_1,x_2,...,x_r \in X}}\s(X,L,x_1,x_2,...,x_r).
\]
It is known that $\s(X,L,r)$ is attained at very general points $x_1,x_2,...,x_r \in X$, 
i.e., there exists a set $U\subset X^r :=X\times X\times ...\times X$ which is 
the complement of a countable union of proper closed subsets of $X^r$, such that 
$\s(X,L,r) = \s(X,L,x_1,x_2,...,x_r)$ for all $(x_1,x_2,...,x_r) \in U$.
It is conjectured that, $\s(X,L,r)$ is equal to $\sqrt{{L^2}/{r}}$ for large $r$ \cite{S-Gas}. In fact, 
the {\it Nagata-Biran-Szemberg Conjecture} predicts exactly when it happens. It says that the multi-point Seshadri 
constant at a very general set of points is maximal when $r\geq k_0^2L^2$, where $k_0$ 
is the smallest integer such that the linear system $|k_0L|$ contains a smooth non-rational curve.   



In this article, we study some of the questions discussed above on surfaces of general type. 
Note that surfaces of general type \cite{B} are minimal surfaces of Kodiara dimension two 
(see def. \eqref{Kodaira dimension}). Not much is known about these surfaces compared to 
surfaces of Kodaira dimension $1$, $0$ or $-\infty$. Here, we have considered a class of 
such surfaces of the form $C \times C$, where $C$ is a smooth curve of genus at least two, 
and have answered some of the questions about Seshadri constants.

This paper is divided into two parts. In \S \eqref{multi-point}, we prove a result about the multi-point 
Seshadri constant of canonical line bundle on a surface of general type. In \S \eqref{CxC}, we consider 
surfaces of general type of the form $C \times C$, where $C$ is a general member of the moduli 
of smooth curves of genus $g\geq 2$ and answer some of the questions discussed above.

\section{Multi-point Seshadri constants on surfaces of general type}\label{multi-point}

Let $X$ be a smooth complex projective variety, and let $L$ be a line bundle on $X$. 
Consider the linear system $|mL|$ for $m\in \mathbb{N}$. The global sections of $mL$ 
defines a rational map  
\begin{eqnarray*}
\phi_{mL} : X \dasharrow \mathbb{P}(H^0(X, mL)).
\end{eqnarray*}

Clearly $dim(\phi_{mL}(X)) \leq dim(X)$. 

\begin{description}
\item [Notation] $\kappa(X,L) := \max\{dim(\phi_{mL}(X)) : m\in \mathbb{N} \}$.
\end{description}

\begin{definition}\label{Kodaira dimension}
Given a smooth complex projective variety $X$ with canonical divisor $K_X$, 
the Kodaira dimension of $X$ is defined as $\kappa(X,K_X)$.
\end{definition}

\begin{definition}
A smooth complex algebraic surface $X$ is said to be of general type if the Kodaira 
dimension $\kappa(X) = 2$. 
\end{definition}

One defines a line bundle $L$ on a smooth complex projective variety $X$ to 
be {\it big} if $\kappa(X,L) = dim(X)$. Therefore, a surface of general type is a 
surface whose canonical divisor is big. The following theorem is a characterisation 
for a nef line bundle to be big \cite{L}. 

\begin{theorem}
Let $X$ be an irreducible projective variety of dimension n and $L$ be a nef line 
bundle on $X$. Then $L$ is big if and only if its top self-intersection is strictly positive, i.e., $(L^n) > 0$.
\end{theorem}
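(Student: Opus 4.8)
The plan is to prove the standard characterization of bigness for nef line bundles: for a nef line bundle $L$ on an $n$-dimensional irreducible projective variety $X$, $L$ is big if and only if $(L^n) > 0$. I would proceed via the asymptotic Riemann--Roch theorem, which expresses the leading-order growth of $h^0(X, mL)$ in terms of the top self-intersection number $(L^n)$. Recall that bigness of $L$ means $\kappa(X,L) = \dim X = n$, which is equivalent to saying that $h^0(X, mL)$ grows like $m^n$ (i.e., there exist constants $c > 0$ and $m_0$ with $h^0(X, mL) \geq c\, m^n$ for all sufficiently divisible $m \geq m_0$). So the entire statement reduces to comparing this growth rate against the intersection number $(L^n)$.

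For the forward direction, I would assume $L$ is big and show $(L^n) > 0$. First I would reduce to the smooth case by passing to a resolution of singularities $\mu: X' \to X$ and pulling back $L$; since $\mu^* L$ remains nef and bigness and top self-intersection are both preserved under such a birational pullback (using the projection formula $((\mu^*L)^n) = (L^n)$ and the fact that $h^0(X', m\mu^*L) = h^0(X, mL)$), it suffices to treat $X$ smooth. On a smooth projective variety the asymptotic Riemann--Roch theorem gives
\[
h^0(X, mL) = \frac{(L^n)}{n!}\, m^n + O(m^{n-1}),
\]
where the point is that for a \emph{nef} line bundle the higher cohomology groups $h^i(X, mL)$ for $i \geq 1$ are of lower order $O(m^{n-1})$, so the Euler characteristic's leading term coincides with the leading term of $h^0$. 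Since bigness forces $h^0(X,mL)$ to grow like $m^n$ with a positive coefficient, we must have $(L^n) > 0$.

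For the reverse direction, I would assume $L$ is nef with $(L^n) > 0$. The same asymptotic Riemann--Roch estimate as above shows $h^0(X, mL) = \frac{(L^n)}{n!} m^n + O(m^{n-1})$, and since $(L^n) > 0$ this forces $h^0(X, mL)$ to grow like a positive multiple of $m^n$ for large $m$. This growth rate is exactly the condition that $\kappa(X,L) = n$, i.e., that $L$ is big. Here I again reduce to the smooth case first so that the Riemann--Roch vanishing estimates apply cleanly.

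The main obstacle, and the technical heart of the argument, is establishing the cohomology vanishing estimate $h^i(X, mL) = O(m^{n-1})$ for $i \geq 1$ when $L$ is nef; this is what guarantees that the leading term of $\chi(X, mL)$ is carried entirely by $h^0$ rather than being cancelled by higher cohomology. I would invoke the standard Fujita-type estimate (a consequence of the nefness of $L$ together with boundedness arguments, e.g. via a fixed ample divisor $A$ and comparison of $h^i(mL)$ against $h^i(mL + A)$ and $h^{i-1}(mL|_A)$ in an induction on dimension) rather than reproving it. With that estimate in hand, both directions of the equivalence follow immediately from the single asymptotic formula above.
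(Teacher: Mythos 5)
The paper itself gives no proof of this statement: it is quoted as a known characterization from Lazarsfeld \cite{L} (Theorem 2.2.16 there), so the only meaningful comparison is with the standard proof in that reference --- and your proposal reproduces exactly that argument. Its architecture is correct: bigness means $h^0(X,mL)$ grows like $m^n$, and for nef $L$ the Fujita-type estimate $h^i(X,mL)=O(m^{n-1})$ for $i\geq 1$ upgrades the Snapper/asymptotic Riemann--Roch expansion of $\chi(X,mL)$ to $h^0(X,mL)=\frac{(L^n)}{n!}m^n+O(m^{n-1})$, from which both implications are immediate.

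One step in your write-up needs repair. Your reduction to the smooth case asserts $h^0(X',m\mu^*L)=h^0(X,mL)$ for a resolution $\mu\colon X'\to X$, but this equality requires $\mu_*\mathcal{O}_{X'}=\mathcal{O}_X$, i.e.\ $X$ normal; for a general irreducible projective variety one only has the inclusion $H^0(X,mL)\hookrightarrow H^0(X',m\mu^*L)$, and this inequality points the wrong way precisely in the direction where you need to deduce bigness of $L$ on $X$ from $(L^n)>0$: sections produced upstairs need not descend. The gap is fixable --- pass through the normalization and observe that the cokernel of $\mathcal{O}_X\to \mu_*\mathcal{O}_{X'}$ is supported on a closed subset of dimension at most $n-1$, so the discrepancy in $h^0$ is $O(m^{n-1})$ and does not affect the leading term --- but the cleaner route, and the one \cite{L} actually takes, is to skip the reduction entirely: both the asymptotic Riemann--Roch expansion and Fujita's estimate $h^i(X,\mathcal{F}\otimes mL)=O(m^{n-i})$ for coherent $\mathcal{F}$ are valid on arbitrary projective schemes, with the latter proved by induction on dimension rather than by resolution, so smoothness is never needed.
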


Motivated by \cite[Theorem 1]{B-S}, we prove the following: 
\begin{theorem}
Let $X$ be a surface of general type and $K_X$ be the canonical line bundle on $X$. 
If $K_X$ is big and nef and $x_1,x_2,...,x_r \in X$ are $r\geq 2$ points, then we have the following.
\begin{enumerate}
\item  $\s(X,K_X,x_1,x_2,...,x_r) = 0  \Leftrightarrow$ at least one of $x_i$ lies on one of the finitely many (-2)-curves on $X$.

\item  If $0< \s(X,K_X,x_1,x_2,...,x_r) < \frac{1}{r}$, then 
	\begin{equation*}
	\s(X,K_X,x_1,x_2,...,x_r) = 
		\begin{cases} 
		\frac{1}{r+1} \; \text{or} \;\frac{2}{5} & \text{if} \quad r = 2, \cr
		\frac{1}{r+1} \; \text{or} \; \frac{1}{r+2}  & \text{if} \quad 3 \leq r < 10, \cr
		\frac{1}{r+1} \; \text{or} \; \frac{1}{r+2} \; \text{or} \; \frac{1}{r+3} & \text{if} \quad r \geq 10. 
		\end{cases}
	\end{equation*}
\end{enumerate}
\end{theorem}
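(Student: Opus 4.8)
The plan is to reduce both parts to the analysis of a single Seshadri curve. Since $\s(X,K_X,x_1,\dots,x_r)<1/r\le \sqrt{K_X^2/r}$ (using $K_X^2\ge 1$ and $r\ge 2$), in every situation covered by the theorem the constant is sub-maximal, so by the existence result of \cite{B-S} it is computed by an irreducible reduced Seshadri curve $C$. I first record the characterization, valid on a surface of general type with $K_X$ big and nef, that an irreducible curve $C$ satisfies $K_X\cdot C=0$ precisely when it is a $(-2)$-curve: nefness gives $K_X\cdot C\ge 0$, while if $K_X\cdot C=0$ the Hodge index theorem forces $C^2<0$ and adjunction $C^2+K_X\cdot C=2p_a(C)-2\ge -2$ then pins $C^2=-2$, $p_a(C)=0$; such curves are contracted to the canonical model, hence finite in number. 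For part (1), if some $x_i$ lies on a $(-2)$-curve $C$ then $K_X\cdot C=0$ while $\mathrm{mult}_{x_i}C\ge 1$, so the defining ratio vanishes and $\s=0$. Conversely $\s=0$ is sub-maximal, so the Seshadri curve $C$ attains it, forcing $K_X\cdot C=0$, i.e. $C$ is a $(-2)$-curve through one of the $x_i$.

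For part (2) I set $a:=K_X\cdot C$ and $m_i:=\mathrm{mult}_{x_i}C$, $m:=\sum_i m_i$, so that $\s=a/m$; note $a\ge 1$ since $\s>0$, and the hypothesis $\s<1/r$ reads $m>ar$. Three inequalities drive the estimate. The Hodge index theorem $(K_X\cdot C)^2\ge K_X^2\,C^2$ together with $K_X^2\ge 1$ gives $C^2\le a^2$. Adjunction combined with the multiplicity bound $p_a(C)\ge\sum_i\binom{m_i}{2}$ gives $\sum_i m_i(m_i-1)\le C^2+a+2\le a^2+a+2$. Finally Cauchy--Schwarz gives $\sum_i m_i^2\ge m^2/r$. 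Writing $\sum_i m_i^2=\sum_i m_i(m_i-1)+m$ and combining the three yields the quadratic constraint
\[
m^2-rm-r(a^2+a+2)\le 0 ,
\]
which bounds $m$ from above in terms of $r$ and $a$.

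The decisive step is then a finite case analysis on $a$. Substituting $m>ar$ into the quadratic forces $a(a-1)\,r<a^2+a+2$, which excludes $a\ge 4$ outright and restricts $a=3$ and $a=2$ to $r=2$; a direct check of the quadratic then eliminates $a=3$ (at $r=2$ it would require $m>6$ but permits only $m\le 6$) and eliminates $a=2$ for $r\ge 3$, leaving for $a\ge 2$ only the possibility $a=2$, $r=2$, $m=5$, that is $\s=2/5$. For $a=1$ the quadratic reads $m^2-rm-4r\le 0$; testing $m=r+k$ shows that $m=r+1$ and $m=r+2$ are always admissible, that $m=r+3$ becomes admissible only once $r$ is large, and that $m=r+4$ never is, which produces the candidate values $1/(r+1)$, $1/(r+2)$ and, for large $r$, $1/(r+3)$.

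The main obstacle is the passage from this quadratic to the exact list, where the integrality of the data must be exploited at the boundary. Solving $m^2-rm-4r\le 0$ over the integers gives $m\le r+2$ for $2\le r<9$ and $m\le r+3$ for $r\ge 9$, so separating the three regimes $r=2$, $3\le r<10$, and $r\ge 10$ requires a finer examination at the endpoints, using that the $m_i$ are nonnegative integers summing to $m$ over at most $r$ points. In particular one must exclude $\s=1/4$ when $r=2$, where the only numerically compatible curve has two double points with $C^2=1$ and $K_X^2=1$, by a closer look at the geometry of such a putative $C$; and one must settle the borderline $r=9$ versus $r=10$ governing the appearance of $1/(r+3)$. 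These boundary verifications, rather than the main quadratic estimate, are the delicate heart of the argument.
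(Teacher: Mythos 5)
Your route is the paper's route almost step for step: part (1) via nefness, Hodge index and adjunction to identify curves with $K_X\cdot C=0$ as $(-2)$-curves, and part (2) via a Seshadri curve, the bound $C^2\le d^2$ from Hodge index, the normalization/genus estimate $\sum_i m_i(m_i-1)\le C^2+K_X\cdot C+2$, Cauchy--Schwarz $\sum_i m_i^2\ge m^2/r$, the resulting quadratic $m^2-rm-r(a^2+a+2)$, and the case split on $a$ with $m=ar+j$. Your elimination of $a\ge 2$ (leaving only $a=2$, $r=2$, $m=5$, i.e.\ $\s=2/5$) is a correct, slightly repackaged version of the paper's analysis of $\phi_{r,d}(dr+j)$ for $d=2$ and $d\ge 3$.

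The genuine gap is exactly where you stop. Because you (correctly) derive the quadratic only with non-strict inequality $\phi_{r,1}(m)\le 0$, the two boundary lattice points $\phi_{2,1}(4)=0$ (which would give $\s=1/4$ at $r=2$) and $\phi_{9,1}(12)=0$ (which would give $\s=1/12$ at $r=9$) survive your estimate, and you merely announce that they ``must be excluded by a closer look at the geometry'' without supplying any argument; as written, the stated case list is therefore not proved precisely at $r=2$ and $r=9$. The paper never confronts these points because its displayed chain asserts the strict inequality $\sum_i m_i^2-\sum_i m_i < 2+d^2+d$, hence $\phi_{r,d}(m)<0$ outright. Note that one of your two open endpoints closes in one line by the integrality you gesture at but do not execute: for $r=9$, $m=12$, $d=1$, nonnegative integer multiplicities summing to $12$ over at most $9$ points force $\sum_i m_i^2\ge 3\cdot 4+6\cdot 1=18$, while the genus chain demands $\sum_i m_i^2\le m+4=16$, a contradiction, so $1/12$ never occurs at $r=9$. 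The $r=2$ endpoint is the genuinely delicate one: equality throughout your chain forces $m_1=m_2=2$, $C^2=1$, and, by the equality case of the Hodge index theorem, $K_X^2=1$ with $C\equiv_{num}K_X$ rational with two double points; if $K_X^2\ge 2$ then $C^2\le d^2/2<d^2$ makes the genus bound strict and the point disappears, so only this $K_X^2=1$ configuration needs a geometric exclusion --- which you do not give (and which the paper's one-symbol strictness arguably glosses over as well). So your proposal reproduces the paper's skeleton faithfully and even spots a real subtlety in the boundary behavior, but it leaves the $r=2$ exclusion of $1/4$ --- the case you yourself flag as the heart of the matter --- unproven.
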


\begin{proof}
The proof of $(1) $ uses the same technique as the case of the single point 
Seshadri constant in \cite{B-S}. Since $K_X$ is big and nef, its self-intersection 
is strictly positive, i.e., $K_X ^2 > 0$. 

$\Rightarrow$: Let $C\subset X$ be a smooth curve passing through at least one 
of the points $x_1,x_2,...,x_r \in X$ with multiplicities $m_1,m_2,...,m_r$, such that 
$0 = \s(X,K_X,x_1,x_2,...,x_r) = K_X\cdot C/ \sum\limits_{i=1}^{i=r}{m_i}$. 
This gives $K_X\cdot C = 0$. Using the Hodge Index Theorem and the fact that 
$K_X^2 >0$, we get $C^2 < 0$. Since $K_X$ is nef, there are no $(-1)$-curves in $X$, 
therefore $C^2 = -2$. Using adjunction formula we conclude that the genus of $C$ is 
$0$, and hence $C$ is a rational curve.

$\Leftarrow$: Conversely, suppose some $x_i$ lies on a $(-2)$-curve $C$, then using 
the adjunction formula and the fact that the arithmetic genus of $C$ is $0$, we find that $K_X\cdot C = 0$. 
Hence $\s(X,K_X,x_1,x_2,...,x_r) = \frac{K_X\cdot C}{\sum\limits_{i=1}^{i=r}{mult_{x_i}C}} = 0$. 

$(2)$ Let $\s(X,K_X,x_1,x_2,...,x_r) < \frac{1}{r}$ which in turn is less than 
$\sqrt{\frac{K_X^2}{r}}$, so that by a generalized statement of \cite[Proposition 1.1]{B-S} 
for the multi-point case, there exists a reduced and irreducible curve $C$ computing 
$\s(X,K_X,x_1,x_2,...,x_r).$  Let $C$ be a reduced and irreducible curve in $X$ passing 
through at least one of the points $x_1,x_2,...,x_r$ with multiplicities $m_1,m_2,...,m_r$ 
such that 
\begin{equation}
\s(X,K_X,x_1,x_2,...,x_r) = \frac{K_X\cdot C}{m}, \label{equation1}
\end{equation}
where $m := \sum\limits_{i=1}^{i=r}{m_i}$. Put $K_X\cdot C = d$. Notice that 
$\frac{d}{m}<\frac{1}{r}$ which gives $m>dr$. Now, using the positivity of $K_X^2$ 
and the Hodge Index Theorem, we get $C^2 \leq K_X^2C^2\leq (K_X\cdot C)^2 = d^2$. 
Let $\tilde{C}$ be the normalization of $C$. Then,

\begin{eqnarray} \nonumber
0 \leq P_a(\tilde{C}) &\leq& P_a(C) - \sum\limits_{i=1}^{i=r}\frac{m_i(m_i -1)}{2} \\ \nonumber
\Rightarrow \sum\limits_{i=1}^{i=r}\frac{m_i(m_i -1)}{2} &\leq& P_a(C) = 
1+ \frac{1}{2}C^2+ \frac{1}{2}K_X\cdot C \leq 1+ \frac{d(d+1)}{2} \\ \nonumber
\Rightarrow \left(\frac{1}{r}m^2 -m\right) &\leq& 
\left(\sum\limits_{i=1}^{i=r}{m_i}^2 -\sum\limits_{i=1}^{i=r}{m_i}\right) < 
2+d^2 +d \\ \label{equa2}
\Rightarrow \quad m^2 -rm &-&r(2+d^2+d) < 0. \\ \nonumber
\end{eqnarray}

We see that equation \eqref{equation1} implies the inequality \eqref{equa2}. Therefore, 
it is enough to find out when the inequality \eqref{equa2} holds. We show that the 
possible choices of $d$ and $m$ satisfying the above conditions are as stated in 
the statement of the theorem. 

Put $\phi_{r,d}(m) := m^2 -rm -r(2+d^2+d)$. 
\begin{description}
\item [Claim] $\phi_{r,d}(m)<0$ $\Longrightarrow$ $d=1,2$ and $m=r+1,r+2,r+3\;\text{and}\;5$ 
with some conditions on $r$.
\end{description}
Since $m>dr$, substituting $m =dr+j$ in $\phi_{r,d}(m)$, we get 
\[ \phi_{r,d}(dr+j) := r^2d^2 +j^2 + 2drj  - r^2d -rd^2 -(d+2)r -rj. \]
\begin{description}
\item [$d=1$] \quad $\phi_{r,1}(r+j) = j^2 +rj -4r \; \begin{cases} 
<0 & \text{if $j=1$ and $r\geq2$} \cr
<0 & \text{if $j=2$ and $r\geq3$}\cr
<0 & \text{if $j=3$ and $r\geq10$}\cr
>0 & \text{otherwise}\cr
\end{cases}
$ 
\item [$d=2$] \quad $\phi_{r,2}(2r+j) = 2r^2+j^2 +3rj - 8r \; \begin{cases}
<0 & \text{if $j=1$ and $r=2$}\cr
>0 & \text{otherwise}\cr
\end{cases}
 $ 
\item [$d\geq3$] \quad $\phi_{r,d}(rd+j) > 0$ for $r\geq 2$.
\end{description}

In order to see the last statement, it is sufficient to show that $\phi_{r,3}(3r+j) >0$ for 
$r\geq2$ and the derivative of $\phi_{r,d}$ with respect to $d$ is positive. This will imply 
that $\phi_{r,d}$ is an increasing function of $d$ and hence positive for all $d\geq3, r\geq2$. 
The first condition is easily checked. The second condition is also satisfied since differentiating 
$\phi_{r,d}$ with respect to $d$ gives
\[ \phi'_{r,d}(rd+j) = r^2(2d-1) - r(2d+1-2j)\]
which is always positive whenever $r\geq2$.
\end{proof}

\section{Surface of general type of the form $C\times C$}\label{CxC}

Let $C$ be a smooth complex projective curve of genus $g \geq 2$ and 
consider a surface $X= C\times C$. Let $F_1 $ and $F_2$ be fibres 
corresponding to the two projections from $C\times C \longrightarrow C$ 
and let $\delta$ be the diagonal. Assume that $C$ is a general member of 
the moduli of smooth curves of genus $g$, where $g\geq 2$. Then, it is 
known that the Néron-Severi group $NS(X)$ is spanned by $F_1, F_2$ 
and $\delta$ \cite[1.5B]{L}. Intersections among them is governed by 
the following formulae:
\begin{eqnarray*}
 (F_1)^2 &=& 0, \qquad
 (F_2)^2 = 0, \cr
 F_1 \cdot F_2 &=& F_1 \cdot \delta = F_2 \cdot \delta =1, \cr
\text{and}\quad &\delta^2& = 2-2g. \nonumber
\end{eqnarray*}
Let $K_X$ be the canonical divisor of $X$. Then, it can be checked that 
$K_X^2 = 8(g-1)^2$ is always positive \cite{H}.

We consider $X$ defined as above and compute the Seshadri constant of an ample 
line bundle $L$ on $X$. Let $L \equiv_{num} a_1 F_1 + a_2 F_2 + a_3 \delta$, 
where $a_1 ,a_2, a_3 \in \mathbb{Z}$ and \enquote{$\equiv_{num}$} represents the 
$numerical$ $equivalence$. Since $L$ is ample, we have

\begin{eqnarray}
L\cdot F_1 &=& a_2 + a_3 > 0, \cr
L\cdot F_2 &=& a_1 + a_3 > 0, \cr
L\cdot \delta &=& a_1 + a_2 - (2g-2)a_3 > 0,\quad \text{and} \cr
L^2 &=& 2(a_1 a_2 + a_1 a_3 + a_2 a_3)- (2g-2)a_3 ^2 > 0. 
\end{eqnarray}

\subsection{Results about $\s(L)$}\label{special}

In this section we partially answer the question about the rationality of 
$\s(L)$ \cite[Question 1.6]{S-S-B-S}. In other words, under some conditions 
on $a_1,a_2$ and $a_3$ we address the question of rationality in affirmative. 
Following is our main theorem.

\begin{theorem}
Let $X=C\times C$, where $C$ is a general member of moduli of smooth curves of genus $g\geq 2$. 
Let $L \equiv_{num} a_1F_1+a_2F_2+a_3\delta$ be an ample line bundle satisfying any of the 
following conditions on $a_1,a_2$ and $a_3$.
\begin{enumerate}
\item $a_3 =0$,
\item $a_3 >0$, $a_1 \leq a_2$ and $a_1^2+a_3^2 < 2a_1a_2$,
\item $a_3 >0$, $a_2 \leq a_1$ and $a_2^2+a_3^2 < 2a_1a_2$,
\item $a_3<0$ and $a_2 \geq \left(\frac{2gk^2+2k+1}{2(k+1)}\right) \cdot a_1$, where 
$k = \lceil \frac{|a_3|/a_1}{1 - |a_3|/a_1} \rceil$ or
\item $a_3<0\; \text{and}\; a_1 \geq \left(\frac{2gl^2+2l+1}{2(l+1)}\right) \cdot a_2$, where 
$l = \lceil \frac{|a_3|/a_2}{1 - |a_3|/a_2} \rceil$.
\end{enumerate}
Then $\s(L) \in \mathbb{Q}$.
\end{theorem}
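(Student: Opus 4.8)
The plan is to establish rationality by showing that in each case the global Seshadri constant $\s(L)$ is computed by one of the generating curve classes (a fibre or the diagonal), whose intersection numbers are all integers, so the resulting infimum is rational. Recall that $\s(L) = \inf_{x\in X}\s(L,x)$, and that if $\s(L) < \sqrt{L^2}$ then it is sub-maximal and hence computed by an actual irreducible curve $C$ with $\s(L) = (L\cdot C)/\mathrm{mult}_x C$ for some $x$. The strategy is therefore to produce, in each case, a \emph{candidate rational value} arising from an explicit curve, and then to prove that no curve can do better, thereby pinning $\s(L)$ to that rational number. The most natural candidates are $L\cdot F_1 = a_2+a_3$ and $L\cdot F_2 = a_1+a_3$ (each divided by multiplicity $1$, since the fibres are smooth), and $(L\cdot\delta)/1 = a_1+a_2-(2g-2)a_3$.

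\textbf{Case analysis.} First I would treat case $(1)$, $a_3=0$: here $L\equiv a_1F_1+a_2F_2$ is a nef combination of fibres on a product, and one expects $\s(L)=\min\{a_1,a_2\}$ via the smooth fibres $F_2, F_1$ (which pass through every point with multiplicity one), after ruling out sub-maximal curves using the Hodge Index bound $C^2\le (L\cdot C)^2/L^2$ together with the adjunction/genus inequality exactly as in the proof of the previous theorem. Cases $(2)$ and $(3)$ are symmetric under swapping the two factors; the hypotheses $a_1^2+a_3^2<2a_1a_2$ (resp.\ $a_2^2+a_3^2<2a_1a_2$) are precisely the numerical conditions forcing the fibre value $a_1+a_3$ (resp.\ $a_2+a_3$) to lie below $\sqrt{L^2}$ and below any competing contribution, so that $\s(L)$ equals that fibre value. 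For the harder cases $(4)$ and $(5)$ with $a_3<0$, the diagonal $\delta$ has negative self-intersection and the fibres may no longer be optimal; here the definition of $k=\lceil (|a_3|/a_1)/(1-|a_3|/a_1)\rceil$ (resp.\ $l$) is engineered so that a specific effective curve, likely built from $\delta$ together with $k$ fibres or an analogous combination, realizes the candidate value, while the lower bound on $a_2$ (resp.\ $a_1$) guarantees this curve beats all others.

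\textbf{Key steps in order.} (i) Compute the three generating intersection numbers and identify the candidate value $v$ in each case as the minimum relevant ratio. (ii) Show $v\le\s(L)$ is false, i.e.\ exhibit a curve giving $\s(L,x)\le v$ at the appropriate point. (iii) Show the reverse: if $\s(L)<\sqrt{L^2}$ it is computed by an irreducible curve $C$ through some $x$ with multiplicity $m$, and then use $C^2\le (L\cdot C)^2/L^2$ from the Hodge Index Theorem, write $C\equiv_{num} b_1F_1+b_2F_2+b_3\delta$, and expand $L\cdot C$, $C^2$, and the adjunction-based genus inequality $\sum m_i(m_i-1)/2\le p_a(C)$ in terms of $b_1,b_2,b_3$ to derive that $(L\cdot C)/m\ge v$. (iv) In the alternative regime $\s(L)=\sqrt{L^2}$, verify $\sqrt{L^2}$ is itself rational under the stated hypotheses, or show this regime cannot occur when the candidate $v<\sqrt{L^2}$.

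The hard part will be step (iii) in cases $(4)$ and $(5)$: translating the ceiling-function conditions into a clean, curve-by-curve optimization and verifying that the prescribed $k$ (resp.\ $l$) genuinely minimizes the Seshadri ratio over all numerically effective classes $b_1F_1+b_2F_2+b_3\delta$. The inequality $a_2\ge\left(\frac{2gk^2+2k+1}{2(k+1)}\right)a_1$ almost certainly emerges from balancing the $\delta$-contribution $\delta^2=2-2g$ against the fibre contributions, and the main technical obstacle is controlling the interaction between the negative self-intersection of $\delta$ and the genus-$g$ dependence so that the optimum is attained at an integral class rather than an irrational boundary value. I expect the cleanest route is to reduce, via Hodge Index, to a two-variable optimization and show the minimizing class is exactly the geometrically natural candidate, after which rationality is immediate since all the $a_i$ and intersection numbers are integers.
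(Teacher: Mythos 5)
Your proposal announces, as its main body of work, something the theorem does not require and the paper never does: determining $\varepsilon(L)$ exactly and proving via a Hodge-index/adjunction optimization over all classes $b_1F_1+b_2F_2+b_3\delta$ that a fibre (or diagonal) class is the actual minimizer. The whole proof is in fact already contained in your opening paragraph, had you pushed it one step further: since a fibre numerically equivalent to $F_i$ passes through every point with multiplicity one, $\varepsilon(L)\le\min\{L\cdot F_1,\,L\cdot F_2\}$, and each hypothesis (1)--(5) is engineered precisely so that $L\cdot F_1\le\sqrt{L^2}$ or $L\cdot F_2\le\sqrt{L^2}$. Then rationality is immediate from the principle you state yourself: if $\varepsilon(L)<\sqrt{L^2}$ it is sub-maximal, hence computed by a curve and rational; and in the boundary case $\varepsilon(L)=\sqrt{L^2}$ the sandwich $\varepsilon(L)\le L\cdot F_i\le\sqrt{L^2}$ forces $\sqrt{L^2}=L\cdot F_i\in\mathbb{Z}$, resolving your worry in step (iv). Your steps (ii)--(iii) --- identifying the Seshadri curve and ruling out all competitors --- are therefore unnecessary, and as proposed they constitute a genuine gap: you give no method to carry out the optimization, and nothing in the hypotheses is designed to support it (indeed the paper leaves the exact value of $\varepsilon(L)$ unknown in cases (2)--(5); the value $\min\{a_1,a_2\}$ in case (1) is proved only later, by a separate B\'ezout argument).

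Your guess about the mechanism in cases (4)--(5) is also off the mark: $k$ does not index an effective curve ``built from $\delta$ together with $k$ fibres,'' and no new curve is produced; the witness remains the fibre $F_2$. Ampleness gives $a_1+a_3>0$, so there is a least positive integer $k$ with $a_3>-\frac{k}{k+1}a_1$, which is exactly the stated ceiling. Writing out $L\cdot F_2\le\sqrt{L^2}$ as $a_1^2+(2g-1)a_3^2\le 2a_1a_2+2a_2a_3$, one then checks
\begin{equation*}
a_1^2+(2g-1)a_3^2 \;<\; a_1^2\left(\frac{2gk^2+2k+1}{(k+1)^2}\right) \;\le\; \frac{2a_1a_2}{k+1} \;<\; 2a_1a_2+2a_2a_3,
\end{equation*}
where the middle inequality is precisely the hypothesis $a_2\ge\bigl(\frac{2gk^2+2k+1}{2(k+1)}\bigr)a_1$ and the outer ones use $|a_3|<\frac{k}{k+1}a_1$ with $a_3<0$. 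Similarly, in cases (2)--(3) the ingredient missing from your sketch is that ampleness via $L\cdot\delta=a_1+a_2-(2g-2)a_3>0$ yields $2a_2a_3>(2g-2)a_3^2$, which combined with $a_1^2+a_3^2<2a_1a_2$ gives the same inequality; and in case (1) no hypothesis beyond $a_3=0$ is needed because one of $a_1\le 2a_2$, $a_2\le 2a_1$ always holds. So the repair is to delete the global optimization and replace it with these one-line numerical verifications of sub-maximality of a fibre.
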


\begin{proof}
$(1)$ Assume $a_3 =0$, then we have $L\equiv_{num} a_1F_1+a_2F_2$. In this case, 
we show that either $L\cdot F_2\leq \sqrt{L^2}$ or $L\cdot F_1\leq \sqrt{L^2}$. This is 
equivalent to show that 
\begin{eqnarray}
\text{either}\quad a_1^2 \leq 2a_1a_2   &\text{or}&  a_2^2 \leq 2a_1a_2,\cr \label{e:ineq}
\Leftrightarrow\quad  \text{either}\quad a_1 \leq 2a_2  &\text{or}& a_2 \leq 2a_1.
\end{eqnarray}
Notice that, when $a_1 > 2a_2$, we get  
\begin{eqnarray*}
2a_1>a_1>2a_2>a_2,
\end{eqnarray*}
implying that the statement \eqref{e:ineq} always holds. 

$(2)$ Let $a_3>0$,  $a_1\leq a_2$  and $a_1^2+a_3^2 <2a_1a_2$. Then, we show 
that $L\cdot F_2 \leq \sqrt{L^2}$. Notice that
\begin{eqnarray} \nonumber
&&L\cdot F_2 \leq \sqrt{L^2} \cr \nonumber
&\Leftrightarrow&(a_1+a_3)^2 \leq L^2 = 2a_1a_2+2a_2a_3+2a_1a_3-a_3^2(2g-2)\cr \nonumber
&\Leftrightarrow& a_1^2+a_3^2+2a_1a_3 \leq 2a_1a_2+2a_2a_3+2a_1a_3-a_3^2(2g-2)\cr \nonumber
&\Leftrightarrow& a_1^2+a_3^2 \leq 2a_1a_2+2a_2a_3-a_3^2(2g-2)\\ \label{eq1}
&\Leftrightarrow& a_1^2+a_3^2 +a_3^2(2g-2) \leq 2a_1a_2+2a_2a_3\\ \label{eq2}
&\Leftrightarrow& a_1^2+a_3^2(2g-1) \leq 2a_1a_2+2a_2a_3. 
\end{eqnarray}
Now, since $L$ is ample, we have
\begin{eqnarray} \nonumber
&& L\cdot \delta = a_1+a_2-a_3(2g-2) >0 \cr \nonumber
&\Rightarrow& 2a_2 > a_1+a_2 > a_3(2g-2)\cr 
&\Rightarrow& a_2 > a_3(g-1) \label{eq3} \\
&\Rightarrow& 2a_2a_3 > a_3^2(2g-2).
\end{eqnarray}
It is easy to see that the equation \eqref{eq1} follows from the hypothesis and the equation \eqref{eq3}.

$(3)$ The proof follows similar to that of (2).

$(4)$ Let $a_3 < 0$ and $a_2 \geq \left(\frac{2gk^2+2k+1}{2(k+1)}\right) a_1$, where 
$k=\lceil \frac{|a_3|/a_1}{1 - |a_3|/a_1} \rceil$. We will show that 
\[L\cdot F_2=a_1+a_3 \leq \sqrt{L^2}.
\] 
It suffices to show that the equation \eqref{eq2} holds. Since $L$ is ample, we get 
\begin{eqnarray}
L\cdot F_1 = a_2+a_3 >0 \quad\text{and}\quad L\cdot F_2 = a_1+a_3 >0.
\end{eqnarray}
This implies that, $a_3$ can at the very least be $-a_1$, i.e., $a_3 > -a_1 > -a_2$. 
Thus, there must exist a positive integer $k$ such that
\begin{eqnarray}
a_3 > -\frac{k}{k+1}a_1, \label{equa8}
\end{eqnarray}
since $-(k/k+1)a_1$ converges to $-a_1$. Choose the least such positive $k$ for which the 
above inequality holds. That is,
\begin{eqnarray*}
k := \lceil \frac{|a_3|/a_1}{1 - |a_3|/a_1} \rceil.
\end{eqnarray*}
Here, $\lceil x \rceil$ represents the least integer greater than or equal to $x$.
We have the following
\begin{eqnarray}
a_1\left(\frac{2gk^2+2k+1}{2(k+1)}\right) &\leq& a_2 ~~\text{~(by hypothesis in (4))}\cr
\Leftrightarrow \quad a_1^2\left(\frac{2gk^2+2k+1}{(k+1)^2}\right) &\leq& 2a_1a_2\left(\frac{1}{k+1}\right)\cr \label{equa7}
\Leftrightarrow a_1^2+ \left(\frac{k}{k+1} \right)^2a_1^2(2g-1) &\leq& 2a_1a_2+2a_2\left(-\frac{k}{k+1} \right)a_1.
\end{eqnarray}
Therefore, the following holds:
\begin{eqnarray*}
a_1^2+a_3^2(2g-1) &<& a_1^2+ \left(\frac{k}{k+1} \right)^2a_1^2(2g-1) \cr
&\leq& 2a_1a_2+2a_2\left(-\frac{k}{k+1} \right)a_1 \cr
&<& 2a_1a_2+2a_2a_3.
\end{eqnarray*}
Where the first and last inequalities hold by \eqref{equa8} and the fact that $a_3<0$, while the second inequality 
follows from \eqref{equa7}. Therefore, inequality \eqref{eq2} holds.

(5) The proof is similar to that of (4).
\end{proof}

\begin{example}
We give an example to show the occurrence of case (4). Let $X = C \times C$ be a surface of general type, where $C$ is a 
smooth curve of genus $g$. Let $L \equiv_{num} a_1F_1 + a_2F_2 + a_3\delta$ be an ample line bundle on $X$. Assume 
that $a_2 > a_1$ as in the hypothesis of (4). Therefore, $a_2  > a_1 > |a_3| > 0$. 

Note that $k = \lceil \frac{|a_3|}{a_1-|a_3|} \rceil,$ and, in general, we have $1 \le k \le |a_3|$. 
When $a_1 \geq 2|a_3|$, we have $k=1$.  Then, the condition on $a_2$ becomes
\[ a_2 \geq \left(\frac{2gk^2+2k+1}{2(k+1)}\right)a_1 =\left(\frac{2g+3}{4}\right)a_1.\] 
So for an ample line bundle $L \equiv_{num} a_1F_1 + a_2F_2 + a_3\delta$ with 
$a_3 < 0$, $a_1 \geq 2|a_3|$, and $a_2 > \left(\frac{2g+3}{4}\right)a_1$, we have $\s(L) \in \mathbb{Q}$. 

 For example, fix $g=2$ and take $a_3 = -10$. Then if $a_1 = 20$, 
we get the least value of $k$ i.e., 1. In this case, we require $a_2 \geq (7/4)a_1 = 35$. But when $a_1 = 11$, we get the highest value of $k$ i.e., 10. So we require $a_2 \geq (421/22)a_1 = 382.72$.
\end{example}

Now we prove the following theorem for $\s(K_X,r)$, where $X=C\times C$ 
as in the above theorem. The primary motivation behind this theorem is \cite{K-A}.

\begin{theorem}\label{Seshadri constant on very general point}
Let $X= C\times C$, where $C$ is a general member of the moduli of smooth curves 
of genus $g \geq 2$. Let $K_X$ be the canonical line bundle on $X$ and 
$r \geq K_X^2 $ be an integer. Then either
\begin{eqnarray*}
\s(X,K_X,r) \geq \sqrt{\frac{r+2}{r+3}} \sqrt{\frac{K_X^2}{r}}
\end{eqnarray*}
or $\s(X,K_X,r)$ is computed by a curve $C_1$ numerically equivalent to $a(F_1+F_2)$ 
(for some $a \in \mathbb{N}$) passing through $r$ very general points with multiplicity one at each point. 
In other words,
\begin{eqnarray*}
\s(X,K_X,r) =\frac{a(K_X\cdot F_1 +K_X\cdot F_2)}{r}.
\end{eqnarray*}
\end{theorem}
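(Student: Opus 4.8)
The plan is to prove the contrapositive of the dichotomy: assuming $\s(X,K_X,r) < \sqrt{\tfrac{r+2}{r+3}}\sqrt{K_X^2/r}$, I will show that $\s(X,K_X,r)$ is computed by a curve numerically equivalent to $a(F_1+F_2)$ meeting the points simply. Since $\sqrt{(r+2)/(r+3)}<1$, this hypothesis makes the constant strictly sub-maximal, so by the multi-point version of \cite[Proposition 1.1]{B-S} it is computed by a reduced irreducible curve $C_1$ through the very general points $x_1,\dots,x_r$ with multiplicities $m_1,\dots,m_r$; by the symmetry of a very general configuration $C_1$ may be taken to pass through all $r$ points, so $m=\sum_i m_i\ge r$, and with $d=K_X\cdot C_1$ we have $\s(X,K_X,r)=d/m$. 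I would first translate everything into the basis $F_1,F_2,\delta$. Writing $p=C_1\cdot F_1\ge 0$, $q=C_1\cdot F_2\ge 0$ and letting $\gamma$ be the $\delta$-coefficient of $C_1$, the relations $K_X\equiv_{num}(2g-2)(F_1+F_2)$ and $K_X^2=8(g-1)^2$ give $d=(2g-2)(p+q)$ and $C_1^2=2pq-2g\gamma^2$, whence the key identity
\[
\frac{(K_X\cdot C_1)^2}{K_X^2}-C_1^2=\tfrac12\big((p-q)^2+4g\gamma^2\big)=:\tfrac12 D .
\]
Here $D\ge 0$ is the Hodge-index defect, and $D=0$ if and only if $\gamma=0$ and $p=q$, i.e. if and only if $C_1\equiv_{num}a(F_1+F_2)$ with $a=p=q$. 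Thus the theorem reduces to showing that the sub-threshold hypothesis forces $D=0$ and $m_i=1$ for every $i$.

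The core of the argument is a \emph{very general points} estimate bounding $\sum_i m_i^2$ by $C_1^2$. Because the $x_i$ are very general, the family of curves in the class of $C_1$ through which $C_1$ moves to acquire the prescribed multiplicities has dimension at least $\sum_i\binom{m_i+1}{2}$. Since $r\ge K_X^2$ forces $p,q$ to be large, the class $C_1-K_X$ is nef and big, so Kodaira vanishing and Riemann–Roch on the surface control $h^0(\mathcal{O}_X(C_1))$ from above; combined with the adjunction bound $\sum_i\binom{m_i}{2}\le p_a(C_1)=1+\tfrac12(C_1^2+d)$ this yields an estimate of the shape
\[
\sum_i m_i^2\le C_1^2+1 .
\]
Feeding in $C_1^2=\tfrac12(p+q)^2-\tfrac12 D$ and the Cauchy–Schwarz inequality $\sum_i m_i^2\ge m^2/r$, I obtain $m^2/r\le \tfrac12(p+q)^2-\tfrac12 D+1$.

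It remains to run the threshold computation. Using $d=(2g-2)(p+q)$ and $K_X^2=8(g-1)^2$, the value relative to the maximum is $\dfrac{\s(X,K_X,r)^2}{K_X^2/r}=\dfrac{(p+q)^2\,r}{2m^2}$, so the sought bound $\s(X,K_X,r)\ge\sqrt{\tfrac{r+2}{r+3}}\sqrt{K_X^2/r}$ is equivalent to $m^2\le\tfrac{r(r+3)}{2(r+2)}(p+q)^2$. Substituting the upper bound for $m^2$ from the previous step, this reduces after simplification to the single inequality
\[
\frac{(p+q)^2}{r+2}+D\ge 2 .
\]
If $C_1\not\equiv_{num}a(F_1+F_2)$ then $D\ge 1$, and in fact $D\ge 4g\ge 8$ when $\gamma\neq 0$ and $D\ge 4$ when $|p-q|\ge 2$, so these cases are immediate. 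In the remaining case $\gamma=0$, $|p-q|=1$ (so $D=1$), I use $m\ge r$ together with $r\le m^2/r\le\tfrac12(p+q)^2-\tfrac12+1$ to get $(p+q)^2\ge 2r-1\ge r+2$, where $r\ge K_X^2\ge 8$ is exactly what is needed. Hence any $C_1$ with $D\ge 1$ satisfies the displayed inequality, contradicting the sub-threshold hypothesis; therefore $D=0$, i.e. $C_1\equiv_{num}a(F_1+F_2)$. Finally, since very general points are smooth points of a member of $|a(F_1+F_2)|$, all $m_i=1$ and $m=r$, giving $\s(X,K_X,r)=d/r=a(K_X\cdot F_1+K_X\cdot F_2)/r$ as claimed.

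The step I expect to be the main obstacle is the very general points estimate $\sum_i m_i^2\le C_1^2+1$: one must make the dimension count in the Hilbert scheme or linear system precise, justify the vanishing that turns Riemann–Roch into a usable upper bound (this is where $r\ge K_X^2$ is genuinely used), and above all pin down the additive constant so that the reduction lands exactly on $\frac{(p+q)^2}{r+2}+D\ge 2$. It is this constant that forces the particular threshold $\sqrt{(r+2)/(r+3)}$, and the integral book-keeping in the borderline case $\gamma=0,\ |p-q|=1$ is the delicate point.
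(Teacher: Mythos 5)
Your endgame is sound, but the foundation has two genuine gaps. The first is the ``very general points estimate'' $\sum_i m_i^2 \le C_1^2+1$, which you correctly flag as the main obstacle: it is not just delicate, it is unproven and your sketch for it fails. The hypothesis $r\ge K_X^2$ is a condition on $r$ alone and does not force $p,q$ to be large --- a priori the computing curve could be a small class through few points (e.g.\ a fiber through one point), in which case $C_1-K_X$ is not nef, not big, indeed not even effective, so the Kodaira-vanishing/Riemann--Roch route has no traction. What is actually available at very general points is Xu's lemma \cite{Xu}, which gives $C_1^2\ge \sum_i m_i^2-m_s$, i.e.\ $\sum_i m_i^2\le C_1^2+m_s$; this yields your additive constant $1$ only if the smallest multiplicity is $1$, and getting \emph{all} multiplicities equal to $1$ is precisely the nontrivial input the paper imports from \cite{K-A}: under the sub-threshold hypothesis $\s(X,K_X,r)<\sqrt{(r+2)/(r+3)}\sqrt{K_X^2/r}$ at very general points, the computing curve passes through $s\le r$ of the points with multiplicity one each, and moreover $C_1^2<s$ by \cite[Remark 2.4]{K-A}; combined with Xu's lemma this pins $C_1^2=s-1$. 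That cited result is also where the specific threshold $\sqrt{(r+2)/(r+3)}$ originates --- your plan tries to regenerate the threshold from an estimate you cannot establish, which is circular.

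The second gap is the claim that ``by the symmetry of a very general configuration $C_1$ may be taken to pass through all $r$ points, so $m\ge r$.'' This is false in general: the infimum defining the multi-point Seshadri constant runs over curves through \emph{at least one} of the points, and the computing curve may meet only $s<r$ of them; symmetry of the configuration gives no control over a single curve. Your borderline case $\gamma=0$, $|p-q|=1$, $D=1$ leans on $m\ge r$ to get $(p+q)^2\ge 2r-1\ge r+2$, so this gap is load-bearing exactly where your reduction $\frac{(p+q)^2}{r+2}+D\ge 2$ is tightest. The paper handles these configurations by an explicit case split: $s=1$ is ruled out because $r\ge K_X^2$ forces $\s=K_X\cdot C_1\ge 1$ above the threshold; $2\le s\le r-1$ is ruled out by the Hodge index bound $(K_X\cdot C_1)^2\ge (s-1)K_X^2$ together with the elementary inequality $r(r+3)(s-1)\ge s^2(r+2)$; only $s=r$ remains, and there your (correct) defect identity $(K_X\cdot C_1)^2/K_X^2-C_1^2=\tfrac12\bigl((p-q)^2+4g\gamma^2\bigr)$ and the resulting dichotomy are essentially the paper's final inequality $a_1^2+a_2^2+4ga_3^2\ge 2a_1a_2+\tfrac23$ in disguise. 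So your conclusion of the argument faithfully mirrors the paper's Case 3, but the two steps that make it legitimate --- multiplicity one via \cite{K-A} and the treatment of curves through fewer than $r$ points --- are respectively unproven and incorrectly dismissed.
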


\begin{proof}
Suppose
\begin{eqnarray*}
\s(X,K_X,r) &<& \sqrt{\frac{r+2}{r+3}} \sqrt{\frac{K_X^2}{r}}. 
\end{eqnarray*}
Then, there exists an effective curve $C_1 \subset X$ passing through $s \leq r$ very 
general points with multiplicities one each \cite{K-A}, 
such that
\[
\s(X,K_X,r) = \frac{K_X\cdot C_1}{s} < \sqrt{\frac{r+2}{r+3}}\sqrt{\frac{K_X^2}{r}}.
\]
By \cite[Remark 2.4]{K-A}, we get $C_1^2 < s.$ Also, since $C_1$ is a curve in 
$X$ passing through $s\leq r$ very general points with multiplicities $m_1\geq m_2\geq ...\geq m_s>0$, 
then by Xu's lemma \cite{Xu},
\begin{equation}
C_1^2 \geq \sum_{i=1}^{i=s}{m_i^2} - m_s
       \geq s-1. 
\end{equation}
Thus, we have $C_1^2 = s-1$. We will show that $C_1$ is numerically equivalent to $a(F_1+F_2)$ for 
some $a \in \mathbb{N}$. 

\begin{itemize}
\item [$Case$ 1]: $s=1$ 
\end{itemize}
In this case, we have $\s(X,K_X,r) = K_X\cdot C_1 \geq1> \sqrt{\frac{r+2}{r+3}}\sqrt{\frac{K_X^2}{r}}$
since $r \geq K_X^2$. This is a contradiction to our assumption.

\begin{itemize}
\item [$Case$ 2]: $2\leq s\leq r-1$ 
\end{itemize}
Notice that
\begin{eqnarray} \nonumber
\left(\frac{K_X\cdot C_1}{s}\right)^2 & \geq & \left(\frac{r+2}{r+3}\right)\left(\frac{K_X^2}{r}\right),\\  
\Leftrightarrow\; r(r+3)(K_X\cdot C_1)^2 &\geq& s^2(r+2)K_X^2 . \label{equation} 
\end{eqnarray}
Using Hodge Index Theorem, we obtain $(K_X\cdot C_1)^2 \geq (s-1)K_X^2$ and 
hence equation \eqref{equation} follows if we prove 
\[
r(r+3)(s-1) \geq s^2(r+2).
\]
This is true for $r\geq 4$. To see this, it is enough to check the inequality at the 
maximal possible value of $s$, i.e., at $s=r-1$:
\begin{eqnarray*}
r(r+3)(r-2) &\geq& (r-1)^2(r+2) \cr
\Leftrightarrow \; r^3+ r^2 -6r &\geq& r^3 -3r +2 \cr
\Leftrightarrow \; r^2 &\geq& 3r + 2. 
\end{eqnarray*}
This holds for $r\geq 4$. By hypothesis $r\geq K_X^2 = 8(g-1)^2 \geq 8$. So we again arrive at a contradiction to our assumption.

\begin{itemize}
\item [$Case$ 3]: $s=r$ 
\end{itemize}
Notice that, the equation \eqref{equation} follows if we prove $(K_X\cdot C_1)^2 \geq (C_1^2 +\frac{1}{3})K_X^2 = \left(r-\frac{2}{3}\right)K_X^2$, because we have the following  
\begin{eqnarray*}
r(r+3)\left(r-\frac{2}{3}\right) &\geq& r^2(r+2)\cr
\Leftrightarrow\; r^3-\frac{2}{3}r^2 +3r^2-2r &\geq& r^3+2r^2\cr
\Leftrightarrow\; \frac{r^2}{3} &\geq & 2r.
\end{eqnarray*}
However, the last inequality holds for $r\geq 6$. Now to see 
$(K_X\cdot C_1)^2 \geq (C_1^2 +\frac{1}{3})K_X^2$, we start by 
putting $C_1\equiv_{num} a_1F_1+a_2F_2+a_3\delta$ for some 
$a_1,a_2,a_3\in \mathbb{Z}$ and $L := F_1+F_2$. We know that 
$K_X = 2(g-1)L$ \cite{H}, so it is enough to show that
\begin{eqnarray*}
(L\cdot C_1)^2 &\geq& \left(C_1^2 +\frac{1}{3}\right)L^2\cr
\Leftrightarrow (a_1+a_2+2a_3)^2 &\geq& \left(2a_1a_2+2a_2a_3+2a_1a_3-a_3^2(2g-2)+\frac{1}{3}\right)\cdot 2\cr
\Leftrightarrow a_1^2+a_2^2+4a_3^2+2a_1a_2+4a_2a_3+4a_1a_3 &\geq& 4a_1a_2+4a_2a_3+4a_1a_3-4a_3^2(g-1)+\frac{2}{3}\cr
\Leftrightarrow a_1^2+a_2^2+4a_3^2g &\geq& 2a_1a_2 +\frac{2}{3}.
\end{eqnarray*}
This clearly holds when $a_3\neq 0$. In the case $a_3= 0$, we see that the equation $a_1^2+a_2^2 \geq 2a_1a_2 +2/3$ does not hold only when $a := a_1=a_2$. In the latter case, $C_1\equiv_{num} a(F_1+F_2)$ is a curve passing through $r$ points with multiplicity one each such that
\[\s(X,K_X,r) =\frac{a(K_X\cdot F_1 +K_X\cdot F_2)}{r}.
\]
\end{proof}


Now, for a line bundle of the form $L\equiv_{num} aF_1+bF_2$ with $a,b >0$ 
we explicitly compute the Seshadri constants of $L$ at one or two points.

\begin{theorem}
Let $X=C \times C$, where $C$ is a smooth curve of genus $g \geq 2$ and let 
$L \equiv_{num} aF_1+bF_2$ be an ample line bundle on $X$. Then 
$\s(L, x) = \min\{a,b\}$ for every $x \in X$.
\end{theorem}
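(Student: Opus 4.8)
The plan is to establish both inequalities $\s(L,x) \leq \min\{a,b\}$ and $\s(L,x) \geq \min\{a,b\}$ for an \emph{arbitrary} point $x \in X$, so that no genericity of $x$ (nor of $C$) enters. By the automorphism of $X$ interchanging the two factors, which swaps $F_1 \leftrightarrow F_2$ and preserves Seshadri constants, I may assume without loss of generality that $a \leq b$, so $\min\{a,b\} = a$. The only numerical data I will use are $F_1^2 = F_2^2 = 0$, $F_1\cdot F_2 = 1$, and the nefness of the fibre classes $F_1, F_2$; in particular the description of $NS(X)$ plays no role, which is precisely why the hypothesis that $C$ be a general member of the moduli can be dropped in this statement.

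For the upper bound I would compute $L\cdot F_1 = aF_1^2 + bF_1\cdot F_2 = b$ and $L\cdot F_2 = aF_1\cdot F_2 + bF_2^2 = a$. Writing $F_2^x$ for the smooth irreducible fibre of the second projection through $x$, it meets $x$ with multiplicity one, so
\[
\s(L,x) \leq \frac{L\cdot F_2^x}{\mathrm{mult}_x F_2^x} = \frac{a}{1} = a = \min\{a,b\}.
\]

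For the lower bound I would take an arbitrary reduced irreducible curve $D\subset X$ through $x$ and set $m := \mathrm{mult}_x D \geq 1$. Since $L \equiv_{num} aF_1 + bF_2$, linearity gives $L\cdot D = a\,(F_1\cdot D) + b\,(F_2\cdot D)$, and both $F_1\cdot D$ and $F_2\cdot D$ are non-negative because $F_1, F_2$ are nef (indeed they are the degrees of the two projections restricted to $D$). If $D$ is itself the fibre $F_2^x$, the ratio equals $a$. Otherwise $D$ and $F_2^x$ are distinct irreducible curves and hence share no component, so the local inequality $I_x(D, F_2^x) \geq \mathrm{mult}_x(D)\cdot \mathrm{mult}_x(F_2^x)$ together with positivity of the remaining local indices gives $F_2\cdot D = F_2^x\cdot D \geq m$. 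Consequently
\[
\frac{L\cdot D}{m} = \frac{a\,(F_1\cdot D) + b\,(F_2\cdot D)}{m} \geq \frac{b\,(F_2\cdot D)}{m} \geq b \geq a,
\]
where the first inequality uses $F_1\cdot D \geq 0$. Taking the infimum over all such $D$ yields $\s(L,x) \geq a$, and combining with the upper bound shows the infimum is attained by $F_2^x$ and equals $\min\{a,b\}$.

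There is no serious obstacle here; the single point requiring care is the lower bound on $F_2\cdot D$, where I must treat the degenerate case $D = F_2^x$ separately and otherwise invoke the intersection-multiplicity inequality $D\cdot E \geq \mathrm{mult}_x(D)\,\mathrm{mult}_x(E)$ for curves with no common component on a smooth surface. I would close by remarking that the entire computation is symmetric in $F_1$ and $F_2$, so in the case $b \leq a$ the same reasoning with $F_1^x$ in place of $F_2^x$ gives $\s(L,x) = b = \min\{a,b\}$.
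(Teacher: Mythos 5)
Your proposal is correct and follows essentially the same route as the paper: the fibres through $x$ give the upper bound $\min\{a,b\}$, and the lower bound comes from the Bézout-type inequality $F_i\cdot D \geq \mathrm{mult}_x D$ for curves $D$ distinct from the fibres. The only (cosmetic) differences are that you reduce to $a \leq b$ by symmetry and use a single fibre class plus nefness of the other, whereas the paper uses both inequalities $C_1\cdot F_i \geq m$ to get the factor $2\min\{a,b\}$, and you exclude the degenerate case by the identity $D = F_2^x$ rather than by numerical equivalence — arguably slightly cleaner, since it avoids the implicit step that any irreducible curve numerically equivalent to $F_i$ is a fibre.
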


\begin{proof}
Since a fibre numerically equivalent to $F_1$ and $F_2$ passes through every point $x \in X$, we get 
\begin{eqnarray*}
\s(L,x) &\leq& L\cdot F_1 = b \quad \text{and}\cr
\s(L,x) &\leq& L\cdot F_2 = a \cr
\Rightarrow \quad \s(L,x) &\leq& \text{min}\{a,b\}.
\end{eqnarray*}
Now, let $C_1$ be any curve in $X$ (not numerically equivalent to $F_1$ and $F_2$) passing through $x$ 
with multiplicity $m$. Then, by Bézout's theorem we obtain
\begin{eqnarray*}
C_1\cdot F_i \geq mult_xC_1 \cdot mult_xF_i = m
\end{eqnarray*}
for $i=1$ and $2$. Therefore, notice that 
\begin{eqnarray*}
L\cdot C_1 &=& a(C_1\cdot F_1) +b(C_1\cdot F_2) \cr
&\geq& \min\{a,b\}(m+m)\cr
\Rightarrow \frac{L\cdot C_1}{m} &\geq& 2\min\{a,b\}.
\end{eqnarray*}
Hence, we get $\s(L,x) = \min\{a,b\}.$
\end{proof}

\begin{theorem}
Let $X=C \times C$, where $C$ is a smooth curve of genus $g \geq 2$ and let 
$L \equiv_{num} aF_1+bF_2$ be an ample line bundle on $X$. Then 
\begin{eqnarray*}
\s(L, x_1,x_2) = \begin{cases}
\min\left\{a,\frac{b}{2}\right\},\; \text{if both $x_1$ and $x_2$ lie on a fixed $F_1$,} \cr
\min\left\{\frac{a}{2},b\right\},\; \text{if both $x_1$ and $x_2$ lie on a fixed $F_2$,} \cr
\min\{a,b\},\;\;\; \text{otherwise.}
\end{cases}
\end{eqnarray*}
\end{theorem}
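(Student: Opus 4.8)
The plan is to compute, in each of the three cases, an upper bound by exhibiting explicit test curves and a matching lower bound via Bézout, exactly in the spirit of the preceding two theorems. Throughout I use the intersection numbers $L\cdot F_1=b$, $L\cdot F_2=a$ and $L^2=2ab$ already recorded, so the universal two-point bound is $\s(L,x_1,x_2)\le\sqrt{L^2/2}=\sqrt{ab}$; the real content lies in the sharper, case-dependent estimates. Writing $x_1=(p_1,q_1)$ and $x_2=(p_2,q_2)$, I note that the class $F_1$ is represented by the fibres $\{p\}\times C$ and $F_2$ by the fibres $C\times\{q\}$, so two points lie on a common $F_1$-fibre exactly when $p_1=p_2$, and on a common $F_2$-fibre exactly when $q_1=q_2$.

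For the upper bounds I would simply test against these fibres. In the case where both points lie on a fixed $F_1$ we have $p_1=p_2$, so the single fibre $\{p_1\}\times C$ passes through both points with multiplicity one, giving the ratio $L\cdot F_1/2=b/2$, while the $F_2$-fibre through $x_1$ meets only $x_1$ and gives $a$; hence $\s(L,x_1,x_2)\le\min\{a,b/2\}$. The $F_2$ case is symmetric, yielding $\min\{a/2,b\}$, and in the generic case ($p_1\ne p_2$ and $q_1\ne q_2$) no fibre passes through both points, so the two fibres through $x_1$ alone give the bound $\min\{a,b\}$.

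The lower bounds are the heart of the argument. Let $C_1$ be an irreducible reduced curve through at least one point, with multiplicities $m_1,m_2$, and expand $L\cdot C_1=a(C_1\cdot F_1)+b(C_1\cdot F_2)$. Applying Bézout against the fibres through the points, as in the preceding proof, the crucial observation is that when $x_1,x_2$ lie on a common $F_1$-fibre $f$, the two local contributions add, so $C_1\cdot F_1=C_1\cdot f\ge m_1+m_2$; by contrast, in the generic case the two $F_1$-fibres are distinct and one obtains only $C_1\cdot F_1\ge\max\{m_1,m_2\}$, and symmetrically for $F_2$. Feeding these in gives $L\cdot C_1/(m_1+m_2)\ge a$ in the first case and, using $\max\{m_1,m_2\}\ge(m_1+m_2)/2$, the estimate $\ge(a+b)/2\ge\min\{a,b\}$ in the generic case, with the $F_2$ case symmetric. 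Together with the upper bounds this pins down each value exactly.

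The one point requiring care is the bookkeeping for curves $C_1$ that are themselves fibres, since Bézout's inequality needs $C_1$ and the test fibre to share no common component; these I would dispatch by hand. An $F_1$-fibre other than the common one misses both points; the common $F_1$-fibre realises $b/2$; and any $F_2$-fibre meets at most one point, contributing $a$ — with the analogous list for the other cases. I expect this separation of the fibre curves to be the only genuinely delicate step: once they are set aside, the Bézout estimate applies uniformly to every remaining curve and the upper and lower bounds match immediately.
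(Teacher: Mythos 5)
Your proposal is correct and follows essentially the same route as the paper: fibres through the points furnish the upper bounds, B\'ezout against the fibres gives the lower bound for every irreducible curve other than a fibre, and the fibre curves themselves are listed separately. The only difference is cosmetic: in the common-fibre case you use the additive estimate $C_1\cdot f \ge m_1+m_2$ to get the ratio $\ge a$, whereas the paper uses the uniform bound $C_1\cdot F_i \ge \max\{m_1,m_2\}$ to get $\ge \min\{a,b\}$; both exceed $\min\{a,b/2\}$, so either suffices, and your explicit bookkeeping of which fibres pass through which points is, if anything, slightly more careful than the paper's write-up.
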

\begin{proof}
Let $C_1$ be a curve not numerically equivalent to $F_1$ and $F_2$ and passing through $x_1$ and $x_2$ 
with multiplicity $m_1$ and $m_2$ respectively. Since there is a fibre numerically equivalent to $F_1$ and 
$F_2$ passing through every point of $X$, by Bézout's theorem we get
 \begin{eqnarray*}
 C_1\cdot F_1 &\geq& mult_{x_1}C_1\cdot mult_{x_1}F_1= m_1,\cr
 C_1\cdot F_1 &\geq& mult_{x_2}C_1\cdot mult_{x_2}F_1=m_2 \quad \text{and} \cr
  C_1\cdot F_2 &\geq& mult_{x_1}C_1\cdot mult_{x_1}F_1= m_1,\cr
 C_1\cdot F_2 &\geq& mult_{x_2}C_1\cdot mult_{x_2}F_1=m_2.
 \end{eqnarray*}
This gives $C_1 \cdot F_1 \geq m$ and $C_1 \cdot F_2 \geq m$ where $m:= \max\{m_1,m_2\}$.
 Now
 \begin{eqnarray*}
L\cdot C_1 &=& a(C_1\cdot F_1) +b(C_1\cdot F_2) \cr
&\geq& \min\{a,b\}(m+m)\cr
\Rightarrow \frac{L\cdot C_1}{m_1+m_2} &\geq& \frac{\min\{a,b\}(2m)}{m_1+m_2} \geq \min\{a,b\}
\end{eqnarray*}
since $2m \geq m_1+m_2$. Now, if both the points $x_1$ and $x_2$ lie either on a fibre $F_1$ 
or on a fibre $F_2$, then we have 
\begin{eqnarray*}
\s(L,x_1,x_2) &\leq& \frac{L\cdot F_1}{1+1} = \frac{b}{2}\; \text{and}\; \s(L,x_1,x_2) \leq \frac{L\cdot F_2}{1} = a,\quad \text{or} \cr
\s(L,x_1,x_2) &\leq& \frac{L\cdot F_2}{1+1} = \frac{a}{2}\; \text{and}\; \s(L,x_1,x_2) \leq \frac{L\cdot F_1}{1} = b \cr
\Rightarrow\; \s(L,x_1,x_2) &\leq& \min\left\{a,\frac{b}{2}\right\} \; \text{or}\; \s(L,x_1,x_2) \leq \min\left\{\frac{a}{2},b\right\}.
\end{eqnarray*}
However, $\min\{a,b\} \geq \min\left\{a,\frac{b}{2}\right\}$ and $\min\{a,b\} \geq \min\left\{\frac{a}{2},b\right\}$. Therefore, we get 
\begin{eqnarray*}
\s(L,x_1,x_2) &=& \min\left\{a,\frac{b}{2}\right\} \; \text{or}\cr
\s(L,x_1,x_2) &=& \min\left\{\frac{a}{2},b\right\}.
\end{eqnarray*}
In case both the points $x_1$ and $x_2$ do not lie on the same fixed fibre, then we get 
\begin{eqnarray*}
\s(L,x_1,x_2) &\leq& \frac{L\cdot F_1}{1} = b\; \text{and} \cr
\s(L,x_1,x_2) &\leq& \frac{L\cdot F_2}{1} = a.\cr
\Rightarrow\; \s(L,x_1,x_2) &\leq& \min\{a,b\}.
\end{eqnarray*}
Hence, we obtain $\s(L,x_1,x_2) = \min\{a,b\}.$
\end{proof}

\begin{remark}
When $X$ is as in the above two theorems, i.e., of the form $C \times C$, the canonical 
divisor $K_X$ of $X$ is given by $p_1^*(K_C) \otimes p_2^*(K_C)$ where $p_1$ and $p_2$ are 
the two natural projections from $C \times C \longrightarrow C$. Since $deg(K_C)$ is $2(g-1)$, 
$K_X$ is numerically equivalent to $2(g-1)(F_1+F_2)$. Hence the above two theorems apply to $K_X$.
\end{remark}

$\bold{Acknowledgements}:$ I would like to thank my advisor Prof. Krishna Hanumanthu 
for his constant encouragement and many useful remarks which were helpful in writing 
this paper. I would also like to thank Prof. Tomasz Szemberg for giving me the idea 
of studying surfaces of general type of the form $C\times C$ and his generous support when 
I visited the Pedagogical University of Cracow. Lastly, I want to thank Prof. D. S. Nagaraj 
for giving many suggestions which improved this paper.

\begin{center}
\large{{\textbf{References}}}
\end{center}

\renewcommand{\section}[2]{}

\end{document}